\newtheorem{theorem}{Theorem}[section]
\newtheorem{lemma}[theorem]{Lemma}
\newtheorem{proposition}[theorem]{Proposition}
\newtheorem{assumption}{Assumption}
\newcommand{\bs}[1]{\boldsymbol{#1}}
\newcommand{\R}{{\mathds{R}}}
\newcommand{\Na}{{N_{\rm a}}}
\newcommand{\No}{{N_{\rm o}}}
\newcommand{\Zz}{\bs{Z}}
\newcommand{\edits}[1]{{\color{black} #1} }
\def\BibTeX{{\rm B\kern-.05em{\sc i\kern-.025em b}\kern-.08em
    T\kern-.1667em\lower.7ex\hbox{E}\kern-.125emX}}
\begin{document}

\title{Sustained oscillations in multi-topic belief dynamics over signed networks
\thanks{Supported by ONR grant N00014-19-1-2556, ARO grant W911NF-18-1-0325, DGAPA-UNAM PAPIIT grant IN102420, and Conacyt grant A1-S-10610, and by  
NSF Graduate Research Fellowship  
DGE-2039656.}
}

\author{Anastasia~Bizyaeva,
        Alessio~Franci,
        and~Naomi~Ehrich~Leonard
\thanks{A. Bizyaeva and N.E. Leonard are with the Dept.
of Mechanical and Aerospace Engineering, Princeton University, Princeton,
NJ, 08544 USA;  bizyaeva@princeton.edu, naomi@princeton.edu.}
\thanks{A. Franci is with the Dept. of Mathematics, National Autonomous University of Mexico, 04510 Mexico City, Mexico; afranci@ciencias.unam.mx}}%

\maketitle

\begin{abstract}
We study the dynamics of belief formation on multiple interconnected topics in networks of agents with a shared belief system. We establish sufficient conditions and necessary conditions under which sustained oscillations of beliefs arise on the network in a Hopf bifurcation and characterize the role of the communication graph and the belief system graph in shaping the relative phase and amplitude patterns of the oscillations. Additionally, we distinguish broad classes of graphs that exhibit such oscillations from those that do not. 
\end{abstract}



\section{Introduction}



\edits{Having the means to evaluate what can happen when a group of social agents forms beliefs on a set of related topics is key to understanding belief propagation in human social networks and to enabling decentralized decision-making  in teams of robots and other distributed technological systems. Dynamic models of social belief formation provide a tool for systematic investigation of  belief  processes and for principled design of distributed algorithms for decision-making.}

\edits{In this paper we investigate conditions under which oscillations emerge in the beliefs of agents in social networks. Temporal oscillations in attitudes and beliefs may be an important feature of individual cognition  \cite{fink2002oscillation}. Oscillations in beliefs are common in social systems; e.g., periodic swings in public opinion between more  conservative and more liberal attitudes are characteristic of the American electorate \cite{stimson2018public}. In a multi-robot problem such as task allocation, it may be important to reliably promote or  avoid oscillations.
Designing oscillations will also be necessary for building electronic circuits with complicated, but well controlled, oscillation patterns as those needed for neuromorphic applications~\cite{Liu2014event}.}

\edits{However, sustained oscillations are rarely observed in popular models of belief formation.} Classically, formation of beliefs or opinions on a single topic is modeled as a discrete-time or continuous-time linear weighted averaging process on a network \cite{DeGroot1974,OlfatiSaber2004}. \edits{For the multi-topic scenario,}  multi-dimensional averaging models have been investigated, e.g., see \cite{friedkin2016network,parsegov2016novel,ye2019consensus,pan2018bipartite,ye2020continuous,ahn2020opinion,wang2022characterizing}. \edits{According to linear models, the beliefs of agents in a static social network typically converge to an equilibrium. The study of these models is thus concerned with characterizing the agents' beliefs at steady state. }


Recently, an alternative modeling paradigm for social opinion formation was proposed that assumes the belief or opinion update rules of agents to be \textit{nonlinear} \cite{bizyaeva2023tac,FranciSIADS}. The nonlinearity is deceptively simple: each agent saturates information it accumulates from its social network. \edits{The imposition of a saturating function is a well-motivated and mild extension of classic averaging models \cite{bizyaeva2023tac,FranciSIADS}. Despite the simplicity, networked beliefs that follow this nonlinear update rule can have}  dramatically different \edits{properties} from those predicted by classic averaging models, \edits{ including sustained oscillations.} \edits{These nonlinear dynamics are also general. Beyond opinion formation, they are closely related to recurrent neural network and neuromorphic electronic circuit models.} To date, analysis of these nonlinear dynamics focused on characterizing multi-stable equilibria 
\cite{bizyaeva2023tac,FranciSIADS,bizyaeva2022switching}. \edits{In this paper we add to this body of work and present novel analysis that characterizes the emergence of belief oscillations and their properties as a function of design parameters including mixed-sign network structure.}

Our main contributions are as follows. 1) We establish sufficient conditions and necessary conditions for the onset of stable sustained oscillations in belief dynamics. 2) We characterize the relative phase and amplitude patterns of the oscillations in terms of the parameters of the model. 

Section~\ref{sec: preliminaries} reviews mathematical preliminaries. Section~\ref{sec: model} introduces the belief dynamics model. Section~\ref{sec: indecision breaking} presents the main results. Classes of graphs that can  lead to oscillations are distinguished from those that cannot in 
Section~\ref{sec:cond}. Section~\ref{sec:ex} presents numerical examples.

\section{Mathematical preliminaries}
\label{sec: preliminaries}

\subsection{Notation}

For  $x = a + i b = r e^{i \phi} \in \mathds{C}$, $\overline{x} = a - i b = r e^{-i \phi}$ is its complex conjugate, $|x| = \sqrt{x \overline{x}} = r$  its modulus, and $\operatorname{arg}(x)$  its argument $\phi$.  
The inner product of  vectors $\mathbf{v},\mathbf{w}$ is $\langle \mathbf{w},\mathbf{v} \rangle = \overline{\mathbf{w}}^T \mathbf{v}$. $\mathbf{0} \in \mathds{R}^N$ is the zero vector and $\operatorname{diag}(\mathbf{v})$ is the diagonal matrix with diagonal entries the elements of $\mathbf{v}$.

The spectrum of $A \in \mathds{R}^{n\times n}$ is $\sigma(A) = \{\lambda_1, \dots, \lambda_{n} \}$ and its spectral radius $\rho(A)= \operatorname{max}\{|\lambda_i|, \ \lambda_i \in \sigma(A)\}$. The kernel of $A$ is $\mathcal{N}(A) = \{\mathbf{v} \in \mathds{R}^n \ s.t. \ A \mathbf{v} = \mathbf{0} \}$. An eigenvalue $\lambda \in \sigma(A)$ is a leading eigenvalue of $A$ if $\operatorname{Re}(\lambda) \geq \operatorname{Re}(\mu)$ for all $\mu \in \sigma(A)$. A leading eigenvalue $\lambda$ of $A$ is a dominant eigenvalue if $\lambda = \rho(A)$.
Given  vectors $\mathbf{v},\mathbf{w}$ or  matrices $M,N$, we say 
$\mathbf{v} \succ \mathbf{w}$ if $v_i > w_i$ for all $i$ and $M \succ N$ if $M_{ij} > N_{ij}$ for all $i,j$. For  matrices $M,N\in\mathds{R}^{m \times n}$, the element-wise Hadamard product $M \odot N \in \mathds{R}^{m \times n}$ is defined as $(M \odot N)_{ij} = M_{ij} N_{ij}$. 
For  matrices $A  = (a_{ij}) \in \mathds{R}^{m \times n}, B= (b_{ij}) \in \mathds{R}^{l\times k}$ the Kronecker product $A\otimes B  \in \mathds{R}^{m l \times nk}$ is defined as
\begin{equation*}
    A \otimes B = \begin{pmatrix} a_{11} B & \dots & a_{1n} B \\ \vdots & \ddots & \vdots \\ a_{m1} B & \dots & a_{mn} B  \end{pmatrix}.
\end{equation*}
A real square matrix $A$ has the strong Perron-Frobenius property if it has a unique dominant eigenvalue $\lambda = \rho(A)$ satisfying $\lambda \geq |\lambda_i|$ for all $\lambda_i \neq \lambda$ in $\sigma(A)$ and its corresponding eigenvector satisfies $\mathbf{v}\succ \mathbf{0}$.
$A$  is irreducible if it cannot be transformed into an upper triangular matrix through similarity transformations. $A$ is eventually positive (eventually nonnegative) if there exists a positive integer $k_0$ such that $A^k \succ 0_{N \times N}$ ($A^k \succeq 0_{N \times N}$) for all integers $k > k_0$.
\begin{proposition} \cite[Theorem 2.2]{noutsos2006perron}
The following statements are equivalent for a real square matrix $A$: (1) $A$ and $A^T$ have the strong Perron-Frobenius property; (2) $A$ is eventually positive; (3) $A^T$ is eventually positive. \label{prop:PerFr}
\end{proposition}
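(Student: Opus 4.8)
The plan is to prove the three equivalences by first disposing of the cheap one and then treating the two substantive implications. Since $(A^T)^k = (A^k)^T$, the matrix $A^k$ is entrywise positive if and only if $(A^T)^k$ is; hence statements (2) and (3) are immediately equivalent, and it suffices to establish $(1) \Leftrightarrow (2)$.

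For $(2) \Rightarrow (1)$, I would fix any integer $m > k_0$, so that $A^m \succ 0$, and apply the classical Perron--Frobenius theorem to the positive matrix $A^m$: its spectral radius $\rho(A^m) = \rho(A)^m$ is a simple eigenvalue, strictly larger in modulus than every other eigenvalue of $A^m$, with a positive eigenvector. Because the spectrum of $A^m$ consists exactly of the powers $\lambda_i^m$, any eigenvalue $\lambda$ of $A$ with $|\lambda| = \rho(A)$ yields $\lambda^m$ with $|\lambda^m| = \rho(A^m)$, and strict dominance forces $\lambda^m = \rho(A)^m > 0$. The crucial observation is that this identity must hold simultaneously for every $m > k_0$; writing $\lambda = \rho(A) e^{i\theta}$ and comparing the constraints for consecutive exponents $m$ and $m+1$ forces $e^{i\theta} = 1$, so $\lambda = \rho(A)$ is the unique eigenvalue of maximal modulus. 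Simplicity of the Perron root of $A^m$, read back through the Jordan structure of $A$, forces $\rho(A)$ to be a simple eigenvalue of $A$, and its one-dimensional eigenspace sits inside the one-dimensional Perron eigenspace of $A^m$, which is spanned by a positive vector; hence the eigenvector of $A$ may be taken positive. Thus $A$ has the strong Perron--Frobenius property, and applying the same argument to $A^T$ (eventually positive by the equivalence of (2) and (3)) yields (1).

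For $(1) \Rightarrow (2)$, let $\lambda = \rho(A)$ (which is real and positive) be the dominant eigenvalue, $\mathbf{v} \succ \mathbf{0}$ its right eigenvector, and $\mathbf{w} \succ \mathbf{0}$ the positive right eigenvector of $A^T$ for $\rho(A^T) = \rho(A)$, i.e.\ a positive left eigenvector of $A$. Since $\lambda$ is simple and strictly dominates the rest of the spectrum, the normalized powers converge to the spectral projection onto the Perron eigenspace,
\begin{equation*}
\frac{A^k}{\lambda^k} \longrightarrow \frac{\mathbf{v}\,\mathbf{w}^T}{\mathbf{w}^T \mathbf{v}}, \qquad k \to \infty.
\end{equation*}
As $\mathbf{v} \succ \mathbf{0}$, $\mathbf{w} \succ \mathbf{0}$ and $\mathbf{w}^T \mathbf{v} > 0$, the limiting matrix is entrywise positive, so $A^k/\lambda^k \succ 0$ for all sufficiently large $k$, and therefore $A^k \succ 0$ because $\lambda > 0$. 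Hence $A$ is eventually positive.

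The main obstacle I anticipate lies in $(2) \Rightarrow (1)$: recovering the complete strong Perron--Frobenius structure of $A$ itself (uniqueness of the modulus-maximal eigenvalue, its simplicity, and positivity of its eigenvector) from properties of the powers $A^m$. Ruling out additional eigenvalues on the circle $|\lambda| = \rho(A)$ cannot be done from a single power and relies essentially on $A^m \succ 0$ holding for \emph{all} large $m$, while the simplicity claim requires comparing the Jordan structures of $A$ and $A^m$ rather than merely their spectra. By contrast, $(1) \Rightarrow (2)$ reduces to the standard convergence of matrix powers for a simple dominant eigenvalue and is essentially routine once the positivity of both the right and left Perron eigenvectors is in hand.
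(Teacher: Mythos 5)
The paper does not prove this proposition at all --- it imports it verbatim as \cite[Theorem 2.2]{noutsos2006perron} --- so there is no in-paper argument to compare against; your proposal supplies a self-contained proof, and it is essentially the standard one from the literature. The structure is sound: $(2)\Leftrightarrow(3)$ via $(A^k)^T=(A^T)^k$; $(2)\Rightarrow(1)$ by applying the classical Perron--Frobenius theorem to a positive power $A^m$, using the fact that $\sigma(A^m)=\{\lambda_i^m\}$ with preserved algebraic multiplicities to pull simplicity and the positive eigenvector back to $A$, and using \emph{two consecutive} exponents to kill any phase $e^{i\theta}$ on the peripheral spectrum (this last step is exactly the right device --- a single power cannot rule out roots of unity); $(1)\Rightarrow(2)$ by convergence of $A^k/\rho(A)^k$ to the rank-one spectral projection $\mathbf{v}\mathbf{w}^T/(\mathbf{w}^T\mathbf{v})\succ 0$. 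One caveat worth flagging: your argument tacitly uses the \emph{standard} definition of the strong Perron--Frobenius property (a \emph{simple} eigenvalue $\rho(A)>0$ that \emph{strictly} dominates every other eigenvalue in modulus, with positive eigenvector). The paper's stated definition writes ``$\lambda\geq|\lambda_i|$'' and omits simplicity; under that literal reading the implication $(1)\Rightarrow(2)$ is false (e.g.\ the adjacency matrix of a single undirected edge has eigenvalues $\pm1$ and eigenvector $(1,1)^T$ for $+1$, yet its powers alternate and are never positive). You are right to work with the strict version --- that is Noutsos's definition and the one under which the theorem holds --- but you should state explicitly that strict dominance and simplicity are being assumed, since both are load-bearing in your $(1)\Rightarrow(2)$ step.
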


\subsection{Signed graphs}

A graph $\mathcal{G} = (\mathcal{V},\mathcal{E})$ is  a set of nodes $\mathcal{V} = \{1, \dots, N \}$ and a set of edges $\mathcal{E}$.
We assume the graph is simple, i.e., there is at most one edge between any two nodes.
The \textit{adjacency matrix} $A = (a_{ik})$ of  $\mathcal G$ satisfies $a_{ik} = 0$ if $e_{ik} \not \in \mathcal{E}$ and $a_{ik} \neq 0$ otherwise. $A$ is \textit{weighted} if nonzero entries $a_{ik}\in\mathbb R$.
$\mathcal G$ is \textit{unweighted} if $a_{ik} \in \{0,1\}$ or \textit{signed unweighted} if $a_{ik} \in \{0,1,-1\}$ for all $i,k \in \mathcal{V}$. $\mathcal G$ is \textit{undirected} whenever $a_{ik} = a_{ki}$ for all $i,k \in \mathcal{V}$, and $\textit{directed}$ otherwise.

The in-degree of node $i$ on $\mathcal{G}$ is 
$\sum_{k} a_{ik}$.
A path on  $\mathcal{G}$ is a finite or infinite sequence of edges that joins a sequence of nodes.
$\mathcal{G}$ is  \textit{strongly connected} if there exists a path from any node to any other node. $\mathcal{G}$ is strongly connected if and only if $A$ is an irreducible matrix. A \textit{switching matrix} $M$ is a diagonal matrix with diagonal entries that are either $1$ or $-1$. Two graphs $\mathcal{G}_1$, $\mathcal{G}_2$ with adjacency matrices $A_1,A_2$ are \textit{switching equivalent} whenever $A_1 = M A_2 M$.  

\subsection{Hopf bifurcation}
\label{sec: hopf bif}

Assume without loss of generality that $(\mathbf{x},p) = (\mathbf{0},0)$ is an equilibrium of a  system $\dot{\mathbf{x}} = \mathbf{f}(\mathbf{x},p)$, where $\mathbf{x}$ is the  state and $p$  a parameter. Then $(\mathbf{0},0)$ is a \textit{Hopf bifurcation point} if it satisfies the following:
{\it i)} The Jacobian  $D\mathbf{f}(\mathbf{0},0)$ has a complex conjugate pair of eigenvalues $\pm i \omega(0)$;
{\it ii)} No other eigenvalues of $D\mathbf{f}(\mathbf{0},0)$ lie on the imaginary axis; {\it iii)} Let $\lambda(p) = r(p) + i \omega(p)$, $\overline{\lambda}(p) = r(p) - i \omega(p)$ be the eigenvalues of $D\mathbf{f}(\mathbf{x},p)$ that are smoothly parametrized by $p$ and for which $r(0) = 0$; then $\frac{\partial r}{\partial p}{(\mathbf{0},0)} \neq 0$.
We  use Lyapunov-Schmidt reduction methods \cite[Chapter VIII]{Golubitsky1985} to study 
the 
limit cycles that emerge through a Hopf bifurcation.

\section{Belief formation model}
\label{sec: model}

We study a nonlinear model of $N_a$ homogeneous agents forming beliefs about $N_o$ topics, adapted from \cite{bizyaeva2023tac,FranciSIADS}. $z_{ij} \in \mathds{R}$ is the belief of agent $i$ about topic $j$. Whenever $z_{ij} > 0 (< 0)$, agent $i$ is in favor of (in opposition to) topic $j$, and when $z_{ij} = 0$ it has a neutral belief on the topic. The magnitude $|z_{ij}|$ signifies the strength of commitment to the belief on topic $j$. The total belief state of agent $i$ is the vector $\Zz_i = (z_{i1},\dots, z_{i N_o}) \in \mathds{R}^{N_o}$, and the total network belief state is $\Zz = (\Zz_1, \dots, \Zz_{N_a}) \in \mathds{R}^{N_a N_o}$. We say agents $i$ and $k$ agree (disagree) on topic $j$ if they both form a non-neutral belief on the topic and $\operatorname{sign}(z_{ij}) = \operatorname{sign}(z_{kj})  (\neq \operatorname{sign}(z_{kj}) )$. Agent $i$ updates its belief on topic $j$ in continuous time as
\begin{multline}
   \dot{z}_{ij}=-d \ z_{ij} + 
	u\left(  S_{1}\!\!\left( \alpha z_{ij} + \gamma  \textstyle\sum_{\substack{k=1 \\ k \neq i}}^\Na(A_a)_{ik} z_{kj}\right) \right.  \\
	\left. + \textstyle\sum_{\substack{l\neq j\\l=1}}^\No S_{2} \!\!\left( \beta (A_o)_{jl} z_{il} + \delta (A_o)_{jl}\textstyle\sum_{\substack{k = 1\\k \neq i}}^\Na (A_a)_{ik} z_{kl}\right)\right) \label{EQ:value_dynamics}
\end{multline}
where $S_1, S_2: \mathds{R} \to \mathds{R}$ are bounded saturation functions satisfying $S_r(0) = 0$, $S'_r(0) = 1$, $S_r''(0) = 0$, $S_r'''(0) \neq 0$, with an odd symmetry $S_r(-y) = -S_r(y)$ where $r \in \{1,2\}$. 
$S_1$ saturates same-topic information and $S_2$ saturates inter-topic information. \edits{These saturations reflect that social network influence on each topic is bounded, and that an agent is maximally affected by small changes in its neighbors' beliefs on a topic when their weighted average is close to zero.} Parameter $d > 0$  represents the agents' resistance to forming strong beliefs. Parameter $u \geq 0$ regulates the amount of \textit{attention} agents allocate towards their social interactions, or their  \textit{susceptibility} to social influence.  

There are two signed directed graphs underlying the belief formation process. One is the \textit{communication} graph $\mathcal{G}_a = (\mathcal{V}_a, \mathcal{E}_a, s_a)$, with  signed adjacency matrix $A_a \in \mathds{R}^{N_a \times N_a}$.  $(A_a)_{ik} = 1$ means agent $i$ is \textit{cooperative} towards agent $k$, and  $(A_a)_{ik} = -1$ means it is \textit{antagonistic} towards agent $k$. The other is the \textit{belief system} graph $\mathcal{G}_o = (\mathcal{V}_o, \mathcal{E}_o, s_o)$, with  signed adjacency matrix $A_o \in \mathds{R}^{N_o \times N_o}$. The graph $\mathcal{G}_o$ encodes the logical interdependence between different topics in the set $\mathcal{V}_o$.  Whenever $(A_o)_{jl} = 1$($-1$), topic $j$ is positively (negatively) aligned with topic $l$ according to the belief system, and whenever $(A_o)_{jl} = 0$, topic $j$ is independent of topic $l$. In the model \eqref{EQ:value_dynamics} we assume that all agents form beliefs following a single shared belief system.

The gains $\alpha, \gamma, \beta, \delta \geq 0$ regulate the relative strengths of influence on beliefs in \eqref{EQ:value_dynamics}. 
$\alpha$ is the strength of agent self-reinforcement of already-held beliefs; $\beta$ is the strength of agent internal adherence to the belief system  $\mathcal{G}_o$.  $\gamma$ is the strength of agent social imitation, i.e. to mimic the beliefs of  neighbors towards whom the agent is cooperative and to oppose the beliefs of those towards whom it is antagonistic. $\delta$ is  agent ideological commitment; when $\delta$ is large,  agents evaluate their neighbors' influence more holistically according to the belief system $\mathcal{G}_o$ rather than through pure imitation along each topic. 
An illustration of these four effects and their respective cumulative weights in the model \eqref{EQ:value_dynamics} is shown in Fig. \ref{fig:comm-gains}.

\begin{figure}
    \centering
    \includegraphics[width=0.9\columnwidth]{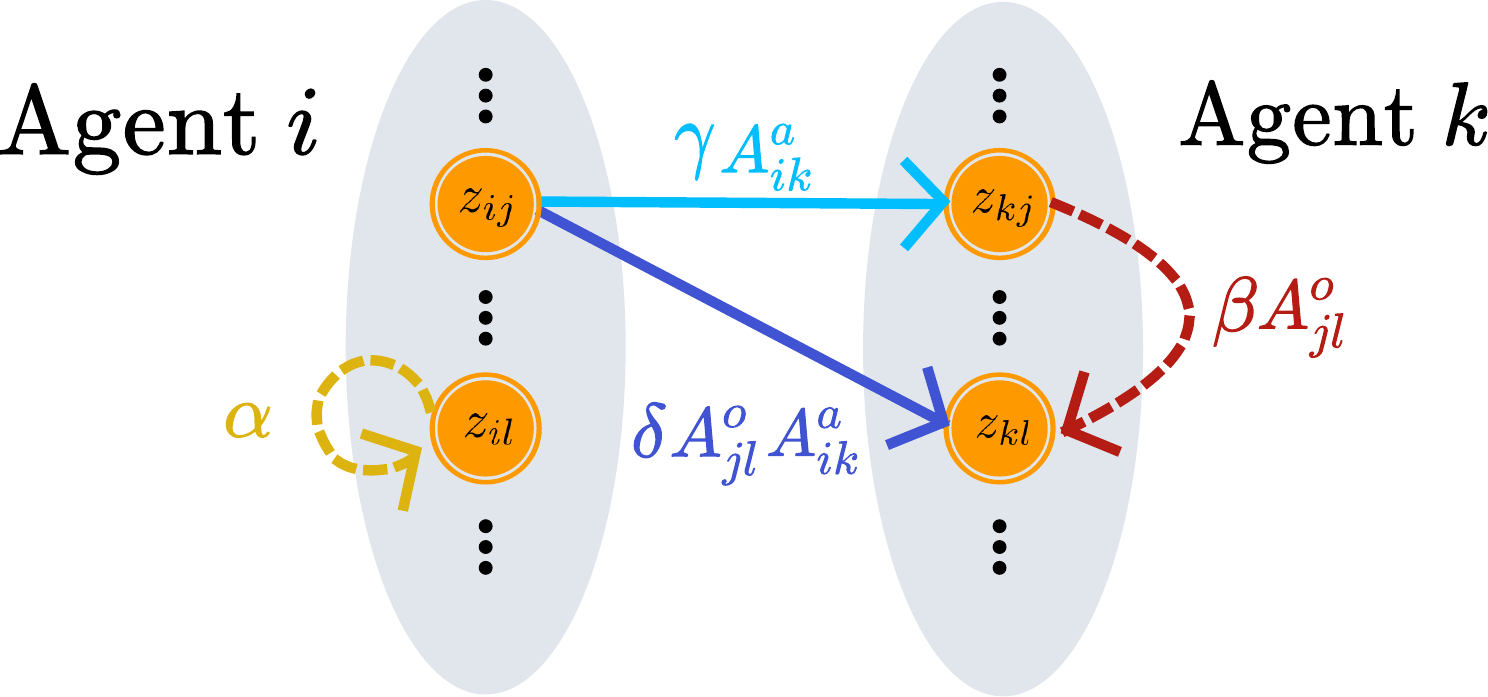}
    \caption{Four classes of communication weights in the model \eqref{EQ:value_dynamics}. Arrow direction follows sensing convention.}
    \label{fig:comm-gains}
\end{figure}


\section{Indecision-breaking and oscillations}
\label{sec: indecision breaking}

We establish sufficient conditions for the onset of small-amplitude periodic oscillations in the dynamics of beliefs  \eqref{EQ:value_dynamics}. The indecision state $\Zz = \mathbf{0}$ in which all agents have neutral beliefs on all topics is an equilibrium of \eqref{EQ:value_dynamics} for all parameter values. To establish the onset of oscillations we first study the stability of $\Zz = \mathbf{0}$. The Jacobian of  \eqref{EQ:value_dynamics} about $\Zz = \mathbf{0}$ is 
\begin{multline}
    J (\mathbf{0},u) = (- d + u \alpha) \mathcal{I}_{N_a}\otimes \mathcal{I}_{N_o} + u \gamma A_a \otimes \mathcal{I}_{N_o} \\
    + u \beta \mathcal{I}_{N_a} \otimes A_o + u \delta A_a \otimes A_o. \label{eq:jac}
\end{multline}
The following proposition connects the eigenvalues and eigenvectors of \eqref{eq:jac} to the eigenvalues and eigenvectors of the signed adjacency matrices $A_a$ and $A_o$.

\begin{proposition}[Eigenvalues and eigenvectors] \label{prop:eigen}
The following statements hold for \eqref{eq:jac}, with some selection of parameters $d,u,\alpha,\gamma,\beta,\delta$:

1) For each $\eta \in \sigma\big(J(\mathbf{0},u)\big)$, there exists $\lambda \in \sigma(A_a)$ and $\mu \in  \sigma(A_o)$ so that 
\begin{equation} \eta = -d + u (\alpha +  \gamma \lambda + \beta \mu + \delta \lambda \mu):= \eta(u,\lambda,\mu); \label{eq:jac_eig}
\end{equation}

2) Suppose $\lambda_i$ is an eigenvalue of $A_a$ with an eigenvector $\mathbf{v}_{a,i}$ and $\mu_j$ is an eigenvalue of $A_o$ with an eigenvector $\mathbf{v}_{o,j}$, then the vector $\mathbf{v}_{a,i} \otimes \mathbf{v}_{o,j}$ is an eigenvector of \eqref{eq:jac} with corresponding eigenvalue $\eta(u,\lambda_i,\mu_j)$.
\end{proposition}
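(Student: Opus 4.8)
The plan is to reduce everything to the mixed-product property of the Kronecker product, $(A\otimes B)(C\otimes D)=(AC)\otimes(BD)$, which collapses the four-term expression \eqref{eq:jac} into a scalar multiple of any separable vector $\mathbf{v}_{a,i}\otimes\mathbf{v}_{o,j}$.

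I would establish statement 2) first by direct computation. Applying $J(\mathbf{0},u)$ to $\mathbf{v}_{a,i}\otimes\mathbf{v}_{o,j}$ and distributing over the four summands, the mixed-product property together with $A_a\mathbf{v}_{a,i}=\lambda_i\mathbf{v}_{a,i}$ and $A_o\mathbf{v}_{o,j}=\mu_j\mathbf{v}_{o,j}$ gives $(\mathcal{I}_{N_a}\otimes\mathcal{I}_{N_o})(\mathbf{v}_{a,i}\otimes\mathbf{v}_{o,j})=\mathbf{v}_{a,i}\otimes\mathbf{v}_{o,j}$, $(A_a\otimes\mathcal{I}_{N_o})(\mathbf{v}_{a,i}\otimes\mathbf{v}_{o,j})=\lambda_i(\mathbf{v}_{a,i}\otimes\mathbf{v}_{o,j})$, $(\mathcal{I}_{N_a}\otimes A_o)(\mathbf{v}_{a,i}\otimes\mathbf{v}_{o,j})=\mu_j(\mathbf{v}_{a,i}\otimes\mathbf{v}_{o,j})$, and $(A_a\otimes A_o)(\mathbf{v}_{a,i}\otimes\mathbf{v}_{o,j})=\lambda_i\mu_j(\mathbf{v}_{a,i}\otimes\mathbf{v}_{o,j})$. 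Summing these contributions weighted by the parameters appearing in \eqref{eq:jac} produces exactly $\eta(u,\lambda_i,\mu_j)\,(\mathbf{v}_{a,i}\otimes\mathbf{v}_{o,j})$, so $\mathbf{v}_{a,i}\otimes\mathbf{v}_{o,j}$ is an eigenvector of $J(\mathbf{0},u)$ with eigenvalue $\eta(u,\lambda_i,\mu_j)$.

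For statement 1) I would show that every $\eta\in\sigma(J(\mathbf{0},u))$ takes the form $\eta(u,\lambda,\mu)$ for some $\lambda\in\sigma(A_a)$ and $\mu\in\sigma(A_o)$. Because $A_a$ and $A_o$ are signed and directed they need not be diagonalizable, so I cannot simply invoke a basis of separable eigenvectors built from 2). Instead I would use Schur triangularization: write $A_a=U_aT_aU_a^*$ and $A_o=U_oT_oU_o^*$ with $U_a,U_o$ unitary and $T_a,T_o$ upper triangular carrying $\sigma(A_a)$ and $\sigma(A_o)$ on their diagonals. Conjugating \eqref{eq:jac} by the unitary matrix $U_a\otimes U_o$ and applying the mixed-product property to each term transforms $J(\mathbf{0},u)$ into $(-d+u\alpha)(\mathcal{I}_{N_a}\otimes\mathcal{I}_{N_o})+u\gamma(T_a\otimes\mathcal{I}_{N_o})+u\beta(\mathcal{I}_{N_a}\otimes T_o)+u\delta(T_a\otimes T_o)$. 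Kronecker products and sums of upper triangular matrices are again upper triangular, so this matrix is upper triangular and its diagonal lists $\sigma(J(\mathbf{0},u))$. The diagonal entry indexed by the pair $(i,j)$ equals $(-d+u\alpha)+u\gamma\lambda_i+u\beta\mu_j+u\delta\lambda_i\mu_j=\eta(u,\lambda_i,\mu_j)$, which proves the claim.

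Both computations are routine; the only genuine subtlety is the possible non-normality of the signed adjacency matrices in statement 1). Passing to Schur form rather than to an eigenbasis sidesteps any diagonalizability assumption, and since the same mixed-product identity drives both parts I expect no further obstacles.
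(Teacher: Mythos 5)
Your proposal is correct and follows essentially the same route as the paper: Schur triangularization of $A_a$ and $A_o$ combined with the mixed-product property to obtain an upper triangular conjugate of $J(\mathbf{0},u)$ for statement 1), and a direct mixed-product computation on $\mathbf{v}_{a,i}\otimes\mathbf{v}_{o,j}$ for statement 2). Your explicit remark that Schur form sidesteps any diagonalizability assumption is exactly the point the paper's proof relies on implicitly.
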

\begin{proof}
1) By Schur's unitary triangularization theorem \cite[Theorem 2.3.1]{horn2012matrix} there exist unitary matrices $U \in \mathbb{C}_{N_a \times N_a}$, $V \in \mathbb{C}_{N_o \times N_o}$ such that $U^* A_a U = \Delta_a$, $V^* A_o V = \Delta_o$ where $\Delta_a,\Delta_o$ are upper triangular complex matrices with eigenvalues of $A_a, A_o$ on the diagonal. Then, using the mixed-product property of the Kronecker product,
$
\Delta_J = (U \otimes V)^* J(\mathbf{0},u) (U \otimes V) 
    = (- d + u \alpha) (U^* \mathcal{I}_{N_a} U)\otimes (V^* \mathcal{I}_{N_o} V)
    + u \gamma (U^* A_a U) \otimes (V^* \mathcal{I}_{N_o} V) + u \beta (U^* \mathcal{I}_{N_a} U) \otimes (V^* A_o V)
    + u \delta (U^* A_a U)\otimes (V^* A_o V)
    = (- d + u \alpha) \mathcal{I}_{N_a}\otimes \mathcal{I}_{N_o} + u \gamma \Delta_a \otimes \mathcal{I}_{N_o} + u \beta \mathcal{I}_{N_a} \otimes \Delta_o  + u \delta \Delta_a \otimes \Delta_o
$.
The matrix $\Delta_J$ is upper triangular, with its diagonal entries corresponding to the eigenvalues of $J(\mathbf{0}_N,u)$. By inspection we see that all diagonal entries of $\Delta_J$ have the form $\eta(u,\lambda,\mu)$ for some $\lambda \in \sigma(A_a)$, $\mu \in \sigma(A_o)$.

2)  $\mathbf{v}_{a,i} \otimes \mathbf{v}_{o,j}$ is an eigenvector of $A_a \otimes A_o$ with eigenvalue $\lambda_i \mu_i$ by \cite[Theorem 4.2.12]{horn1991topics}; it is also an eigenvector of $ \mathcal{I}_{N_a} \otimes \mathcal{I}_{N_o}$, $ A_a \otimes \mathcal{I}_{N_o} $, and $ \mathcal{I}_{N_a} \otimes A_o $ with corresponding eigenvalues $1, \lambda_i, \mu_i$, respectively, and the proposition statement follows from multiplying \eqref{eq:jac} by $\mathbf{v}_{a,i} \otimes \mathbf{v}_{o,j}$. 
\end{proof}
Whenever eigenvalues $\lambda\in \sigma(A_a)$ and $\mu \in \sigma(A_o)$ define an eigenvalue $\eta \in \sigma\big(J(\mathbf{0},u)\big)$ through  \eqref{eq:jac_eig}, we  say  $\lambda$ and $\mu$ \textit{generate} $\eta$. We define the maximal real part of the social network contribution to the eigenvalue \eqref{eq:jac_eig} of the Jacobian  as 
\begin{equation}
    K = \operatorname{max}_{\lambda \in \sigma(A_a), \mu \in \sigma(A_o)} \operatorname{Re}\left(\alpha + \gamma \lambda + \beta \mu + \delta \lambda \mu \right). \label{eq:K}
\end{equation}
For every leading eigenvalue $\eta_{max}$ of \eqref{eq:jac}, $\operatorname{Re}(\eta_{max}) = - d + u K$. In the following lemma we establish existence of a critical value of attention to social interactions at which the neutral equilibrium $\Zz = \mathbf{0}$ loses stability.

\begin{lemma}[Stability of origin] \label{lem:stab} Consider \eqref{EQ:value_dynamics} and suppose $K > 0$. If $u < u^* := d/K$, the neutral equilibrium $\Zz = \mathbf{0}$ is locally exponentially stable.  If $u > u^*$ it is unstable.
\end{lemma}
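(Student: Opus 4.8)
The plan is to invoke the principle of linearized stability (Lyapunov's indirect method). Because the saturation functions $S_1,S_2$ are smooth, the right-hand side of \eqref{EQ:value_dynamics} is at least $C^1$, so the local stability of the neutral equilibrium $\Zz = \mathbf{0}$ is decided entirely by the spectrum of the Jacobian $J(\mathbf{0},u)$ in \eqref{eq:jac}: if every eigenvalue of $J(\mathbf{0},u)$ has strictly negative real part then $\Zz=\mathbf{0}$ is locally exponentially stable, whereas if at least one eigenvalue has strictly positive real part then $\Zz=\mathbf{0}$ is unstable. Neither conclusion requires control of eigenvalues sitting on the imaginary axis, so the only spectral quantity I need to track is the maximal real part of $\sigma\big(J(\mathbf{0},u)\big)$.

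First I would compute that maximal real part. By Proposition~\ref{prop:eigen}, every $\eta \in \sigma\big(J(\mathbf{0},u)\big)$ has the form \eqref{eq:jac_eig}, namely $\eta = -d + u(\alpha + \gamma\lambda + \beta\mu + \delta\lambda\mu)$ for some $\lambda \in \sigma(A_a)$ and $\mu \in \sigma(A_o)$. Taking real parts and using that $d$ and $u$ are real with $u \geq 0$, I would pull the nonnegative scalar $u$ out of the maximization to obtain
\[
\max_{\eta \in \sigma(J(\mathbf{0},u))} \operatorname{Re}(\eta) = -d + u \max_{\lambda,\mu} \operatorname{Re}\!\left(\alpha + \gamma\lambda + \beta\mu + \delta\lambda\mu\right) = -d + uK,
\]
which recovers the identity $\operatorname{Re}(\eta_{max}) = -d + uK$ recorded after \eqref{eq:K}, with $K$ as in \eqref{eq:K}. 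The hypothesis $K>0$ guarantees that $u^* = d/K$ is well defined and strictly positive.

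The conclusion then follows by a case analysis on the sign of $-d+uK$. If $u < u^* = d/K$, then $uK < d$ (using $K>0$), so $\operatorname{Re}(\eta_{max}) = -d + uK < 0$; since $\eta_{max}$ is a leading eigenvalue, every eigenvalue of $J(\mathbf{0},u)$ lies in the open left half-plane and local exponential stability follows. If instead $u > u^*$, then $uK > d$, so the leading eigenvalue satisfies $\operatorname{Re}(\eta_{max}) = -d + uK > 0$, exhibiting an eigenvalue in the open right half-plane and hence instability.

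I expect the argument to be essentially routine; the one point requiring genuine care is the interchange of the maximum with the real part, which is valid precisely because $u \geq 0$ is a common nonnegative multiplier of the social-network contribution. This step would fail for negative $u$, where the minimum rather than the maximum would govern the leading eigenvalue. A secondary point worth stating explicitly is the smoothness of \eqref{EQ:value_dynamics} near $\Zz=\mathbf{0}$, which is what licenses the transfer of linearized stability to the full nonlinear system.
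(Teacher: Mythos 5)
Your proposal is correct and follows essentially the same route as the paper: identify the leading eigenvalue's real part as $-d+uK$ via Proposition~\ref{prop:eigen} and the definition of $K$ in \eqref{eq:K}, then conclude by Lyapunov's indirect method. Your explicit remark about pulling the nonnegative factor $u$ through the maximization is a detail the paper leaves implicit (it states $\operatorname{Re}(\eta_{max})=-d+uK$ just before the lemma), but it is the same argument.
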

\begin{proof}
Let $\eta_{max}$ be a leading eigenvalue of \eqref{eq:jac}. Whenever $u < u^* (> u^*)$,  $\operatorname{Re}(\eta_{max}) < 0 (> 0)$. Furthermore since $\eta_{max}$ is a leading eigenvalue, whenever $u < u^*$, $\operatorname{Re}(\eta) < 0$ for all $\eta \in \sigma(J(\mathbf{0},u))$. The stability conclusions follow by Lyapunov's indirect method \cite[Theorem 4.7]{Khalil2002}.
\end{proof}

Lemma \ref{lem:stab} establishes the existence of a \textit{bifurcation point} $u = u^*$ at which the origin loses stability. As a consequence of the center manifold theorem \cite[Theorem 3.2.1]{guckenheimer2013nonlinear} for values of attention parameter $u$ in a neighborhood of $u^*$, trajectories of \eqref{EQ:value_dynamics} that start in a neighborhood of $\Zz = \mathbf{0}$ will settle on an attracting manifold with dimension determined by the number of eigenvalues of $J(\mathbf{0},u^*)$ with zero real part. We examine the onset of network oscillations along this manifold which result from a \textit{Hopf bifurcation}.


The following standing assumption ensures that Conditions {\it i)}, {\it ii)} for a Hopf bifurcation (Section~\ref{sec: hopf bif}) are satisfied.

\begin{assumption}
The leading eigenvalues of \eqref{eq:jac} are a complex-conjugate pair, $\eta_{+}$ and $\eta_{-} = \overline{\eta}_{+}$, with $\operatorname{Im}(\eta_{\pm}) \neq 0$. \label{ass1}
\end{assumption}

\edits{In Section \ref{sec:cond} we establish several broad classes of graphs for which this assumption is either always, or never, satisfied. }Let the pair $\lambda^\dagger = \lambda_a + i \lambda_c \in \sigma(A_a)$ and $\mu^\dagger = \mu_o + i \mu_c \in \sigma(A_o)$ generate one of the leading eigenvalues of Assumption \ref{ass1} according to \eqref{eq:jac_eig}, i.e. suppose that either $\eta_+ = \eta(u,\lambda^\dagger,\mu^{\dagger})$ or $\eta_- = \eta(u,\lambda^{\dagger},\mu^{\dagger})$. Then it holds that
\begin{subequations} \label{eq:eigs-form}
\begin{equation}
    \operatorname{Re}(\eta_{\pm}) = - d + u \big(\alpha + \gamma \lambda_a + \beta \mu_o + \delta (\lambda_a \mu_o - \lambda_c \mu_c) \big), \label{eq:eigs-form-a}
\end{equation}
\begin{equation}
    \operatorname{Im}(\eta_{\pm}) = \pm  u \big| \gamma \lambda_c + \beta \mu_c + \delta ( \lambda_a \mu_c + \lambda_c \mu_o )\big|.\label{eq:eigs-form-b}
\end{equation}
\end{subequations}
Note that in this case, 
\begin{equation}
    K = \alpha + \gamma \lambda_a + \beta \mu_o + \delta (\lambda_a \mu_o - \lambda_c \mu_c). \label{eq:K-osc}
\end{equation} We are now ready to establish our first main result.

\begin{theorem}[Hopf bifurcation] \label{thm:hopf}
Consider \eqref{EQ:value_dynamics} with communication graph $\mathcal{G}_a$ and belief system graph $\mathcal{G}_o$. Let Assumption \ref{ass1} hold, and suppose $K > 0$. 

Suppose $\lambda^{\dagger}\in\sigma(A_a), \mu^{\dagger} \in \sigma(A_o)$ generate $\eta_{+}(u)$. Let $\mathbf{w}_a,\mathbf{v}_a \in \mathds{C}^{N_a}$ be the left and right eigenvectors of $A_a$ corresponding to $\lambda^{\dagger}$ and $\overline{\lambda^{\dagger}}$, respectively; let $\mathbf{w}_o,\mathbf{v}_o\in \mathbb{C}^{N_o}$ be the left and right eigenvectors of $A_o$ corresponding to $\mu^{\dagger}$ and $\overline{\mu^{\dagger}}$, respectively. Choose the eigenvectors to satisfy the biorthogonal normalization condition 
\begin{equation*}
    \langle \mathbf{w}_a \otimes \mathbf{w}_o, \mathbf{v}_a \otimes \mathbf{v}_o, \rangle = 2, \ \ \langle \overline{\mathbf{w}_a \otimes \mathbf{w}_o}, \mathbf{v}_a \otimes \mathbf{v}_o, \rangle = 0. \label{eq:eigenvector-normalization-hopf}
 \end{equation*}

1) There is a unique 3-dimensional center manifold $W^c \subset \R^{N_a N_o} \times \R{}$ passing through $(\Zz,u) = (\mathbf{0},u^*)$, tangent to $\operatorname{span}\{ \operatorname{Re}(\mathbf{v}_a \otimes \mathbf{v}_o), \operatorname{Im}(\mathbf{v}_a \otimes \mathbf{v}_o) \} $ at $u = u^* = d/K$. There is a family of periodic orbits of \eqref{EQ:value_dynamics} that bifurcates from the neutral equilibrium $\Zz = \mathbf{0} $ along $W_c$ at $u = u^*$;

2) Let $b = \operatorname{Re}\Bigg( \Big( S_1'''(0) \left(\alpha + \gamma \lambda^{\dagger}\right) \left| \alpha + \gamma \lambda^{\dagger}\right|^2 +$
\begin{gather}
+ S_2'''(0) \left(\beta + \delta  \lambda^{\dagger}\right)\mu^{\dagger}\left| \beta +  \delta \lambda^{\dagger}\right|^2 \left| \mu^\dagger \right|^2 \Big)  \label{eq:Hopf_stab_cond}  \\
\times \langle \mathbf{w}_a\otimes \mathbf{w}_o , \lvert\mathbf{v}_a\otimes \mathbf{v}_o\rvert^2 \odot (\mathbf{v}_a\otimes \mathbf{v}_o) \rangle \Bigg)\nonumber
\end{gather}
where $\lvert\mathbf{x}\rvert^2 = \overline{\mathbf{x}} \odot \mathbf{x}$. Whenever $b < 0$ the bifurcating periodic solutions appear supercritically (for $u > u^*$) and are locally asymptotically stable; whenever $b > 0$, the solutions appear subcritically (for $u < u^*$) and are unstable;

3) When $\lvert u - u^* \rvert$ is small, the period of the solutions is near $2 \pi /(u^*|\gamma \lambda_c + \beta \mu_c + \delta (\lambda_a \mu_c + \lambda_c \mu_o)|)$, the difference in phase between $z_{ij}(t)$ and $z_{kl}(t)$ is near $ \varphi = \operatorname{arg}((\mathbf{v}_a)_i (\mathbf{v}_{o})_j) - \operatorname{arg}((\mathbf{v}_a)_k (\mathbf{v}_{o})_l)$, and the amplitude of $z_{ij}(t)$ is greater than the amplitude of $z_{kl}(t)$ if and only if $\lvert (\mathbf{v}_a)_i \rvert \lvert (\mathbf{v}_o)_j\rvert > \lvert (\mathbf{v}_a)_k \rvert \lvert (\mathbf{v}_o)_l\rvert $.
\end{theorem}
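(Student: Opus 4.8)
The plan is to treat this as a textbook Hopf bifurcation and carry out a Lyapunov--Schmidt reduction in the style of \cite[Ch.~VIII]{Golubitsky1985}, exploiting the fact that the nonlinearity is purely cubic to lowest order. For part 1), I would first confirm the three Hopf conditions of Section~\ref{sec: hopf bif} at $u=u^*$: condition \emph{i)} is Assumption~\ref{ass1} together with $\operatorname{Re}(\eta_\pm)=-d+u^*K=0$ from Lemma~\ref{lem:stab}; condition \emph{ii)} holds because $\eta_\pm$ are leading, so every other eigenvalue has strictly smaller---hence negative---real part at $u=u^*$; and the transversality condition \emph{iii)} follows from $\partial_u\operatorname{Re}(\eta_\pm)=K>0$ via \eqref{eq:eigs-form-a} and \eqref{eq:K-osc}. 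By Proposition~\ref{prop:eigen} the critical eigenvectors are $\mathbf{v}_a\otimes\mathbf{v}_o$ and its conjugate, so the parameter-dependent center manifold theorem \cite[Theorem 3.2.1]{guckenheimer2013nonlinear} yields the unique $3$-dimensional $W^c$ tangent to $\operatorname{span}\{\operatorname{Re}(\mathbf{v}_a\otimes\mathbf{v}_o),\operatorname{Im}(\mathbf{v}_a\otimes\mathbf{v}_o)\}$, and the Hopf theorem produces the bifurcating branch of periodic orbits.

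For part 2), I would reduce \eqref{EQ:value_dynamics} to its complex normal form $\dot A=\sigma(u)A+b_c|A|^2A+\cdots$ on $W^c$, where $A$ is the amplitude of the critical mode. The decisive simplification is that $S_r(0)=S_r''(0)=0$ and $S_r'(0)=1$, so $S_r(y)=y+\tfrac16 S_r'''(0)y^3+O(y^5)$ has \emph{no} quadratic term; consequently the reduced vector field has no quadratic part and the first Lyapunov coefficient reduces to a single projected cubic term, with none of the usual second-order corrections that arise from inverting the linearization on the complementary eigenspaces. I would then write the cubic part of the right-hand side of \eqref{EQ:value_dynamics} as $\tfrac16 S_1'''(0)(y^{(1)}_{ij})^3+\tfrac16 S_2'''(0)\sum_{l\neq j}(y^{(2)}_{ijl})^3$, where $y^{(1)},y^{(2)}$ are the arguments of $S_1,S_2$, evaluate the associated symmetric trilinear form at $(\mathbf{v}_a\otimes\mathbf{v}_o,\,\mathbf{v}_a\otimes\mathbf{v}_o,\,\overline{\mathbf{v}_a\otimes\mathbf{v}_o})$ to extract the resonant $|A|^2A$ monomial, and project onto the adjoint mode $\mathbf{w}_a\otimes\mathbf{w}_o$ using the biorthonormalization. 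The eigenvector relations $A_a\mathbf{v}_a=\overline{\lambda^\dagger}\mathbf{v}_a$, $A_o\mathbf{v}_o=\overline{\mu^\dagger}\mathbf{v}_o$ (and the adjoint relations for $\mathbf{w}_a,\mathbf{w}_o$) collapse the same-topic input to a factor $\alpha+\gamma\lambda^\dagger$ and the inter-topic input to $(\beta+\delta\lambda^\dagger)\mu^\dagger$, while the signed-unweighted structure $(A_o)_{jl}^3=(A_o)_{jl}$ lets the sum over $l$ be re-summed against $A_o$ to recover the remaining spectral factors. The spatial dependence then separates as the common inner product $\langle\mathbf{w}_a\otimes\mathbf{w}_o,\,|\mathbf{v}_a\otimes\mathbf{v}_o|^2\odot(\mathbf{v}_a\otimes\mathbf{v}_o)\rangle$, leaving exactly the scalar $b$ of \eqref{eq:Hopf_stab_cond}; its sign then gives sub/supercriticality and stability by the standard Hopf exchange-of-stability result.

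For part 3), the period is read off from the critical frequency: at $u=u^*$ the imaginary part \eqref{eq:eigs-form-b} gives $\omega=u^*|\gamma\lambda_c+\beta\mu_c+\delta(\lambda_a\mu_c+\lambda_c\mu_o)|$, so the period is $2\pi/\omega$ to leading order. The phase and amplitude statements follow from the leading-order shape of the solution on $W^c$: since to first order $z_{ij}(t)\approx\operatorname{Re}\!\big(A e^{i\omega t}(\mathbf{v}_a)_i(\mathbf{v}_o)_j\big)=|A|\,|(\mathbf{v}_a)_i|\,|(\mathbf{v}_o)_j|\cos\!\big(\omega t+\arg A+\arg((\mathbf{v}_a)_i(\mathbf{v}_o)_j)\big)$, the relative phase of $z_{ij}$ and $z_{kl}$ is the difference of the arguments and the amplitude ordering is the ordering of $|(\mathbf{v}_a)_i|\,|(\mathbf{v}_o)_j|$, exactly as claimed.

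I expect the cubic-coefficient computation in part 2) to be the main obstacle: keeping the conjugation conventions consistent (the left/right eigenvectors are attached to $\lambda^\dagger$ versus $\overline{\lambda^\dagger}$), extracting only the \emph{resonant} $|A|^2A$ term from the trilinear form, and verifying that the inter-topic saturation genuinely factors through $A_o$ into the stated spectral and inner-product factors. The remaining steps are routine once the no-quadratic-term simplification is in place.
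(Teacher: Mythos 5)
Your proposal follows essentially the same route as the paper: verify the Hopf eigenvalue and transversality conditions at $u=u^*$ with crossing speed $K$, invoke the center manifold and Hopf theorems for part 1), carry out the Lyapunov--Schmidt/normal-form reduction of \cite[Ch.~VIII]{Golubitsky1985} exploiting the absence of quadratic terms to obtain the single projected cubic coefficient $b$ for part 2), and read off period, phase, and amplitude from the first-order solution $e^{i\omega t}\,\mathbf{v}_a\otimes\mathbf{v}_o$ for part 3). The only cosmetic difference is that you phrase the reduction as the complex normal form $\dot A=\sigma A+b_c|A|^2A$ whereas the paper states the equivalent real reduced pitchfork equation $f(y,u)=Ky(u-u^*)+\tfrac{1}{16}u^*by^3$; the substance is the same.
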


Theorem~\ref{thm:hopf} provide sufficient conditions for the emergence of {\it stable} sustained oscillations at an indecision-breaking bifurcation. Statement 3) of Theorem~\ref{thm:hopf} relates the relative phase and amplitude pattern along the emerging oscillation to the spectral properties of $\mathcal{G}_a$ and $\mathcal{G}_o$.

\section{Necessary and sufficient graph properties \label{sec:cond}}

Assumption~\ref{ass1} is a necessary condition for the emergence of oscillations at an indecision breaking bifurcation. The following proposition singles out classes of communication and belief system graphs for which the leading eigenvalues of~\eqref{eq:jac} are necessarily real and therefore no oscillation in beliefs can emerge at the breaking of indecision.

\begin{proposition}[Graphs that never support oscillations]\label{prop: no oscil graphs}
Consider \eqref{EQ:value_dynamics} with communication graph $\mathcal{G}_a$ and belief system graph $\mathcal{G}_o$ with signed adjacency matrices $A_a,A_o$. Suppose at least one of the following statements is true: 1) $\mathcal{G}_a$ and $\mathcal{G}_o$ are undirected; 2) for both $\mathcal{G}_a$ and $\mathcal{G}_o$ 
there exist switching matrices $M_a$,$M_o$ 
such that $M_a A_a M_a$ and $M_o A_o M_o$ are eventually positive. Then the indecision-breaking bifurcation of the origin at $u = u^*$ cannot be a Hopf bifurcation.

\end{proposition}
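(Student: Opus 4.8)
The plan is to show that under either hypothesis the Jacobian \eqref{eq:jac} possesses a \emph{real} leading eigenvalue, which is incompatible with Assumption~\ref{ass1} and therefore with conditions \emph{i)}--\emph{ii)} in Section~\ref{sec: hopf bif} for a Hopf bifurcation. By Proposition~\ref{prop:eigen}, every eigenvalue of \eqref{eq:jac} has the form \eqref{eq:jac_eig} for some $\lambda\in\sigma(A_a)$, $\mu\in\sigma(A_o)$, and an eigenvalue is leading exactly when its generating pair attains the maximal real part $K$ of \eqref{eq:K}. It thus suffices to exhibit a \emph{real} pair $(\lambda,\mu)$ attaining $K$: if one exists, then $-d+uK$ is a real leading eigenvalue, so the set of leading eigenvalues of \eqref{eq:jac} cannot consist solely of a complex-conjugate pair, and Assumption~\ref{ass1} fails. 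Note this requires only existence of one real leading eigenvalue, not uniqueness.

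For statement~1), if $\mathcal{G}_a$ and $\mathcal{G}_o$ are undirected then $A_a=A_a^T$ and $A_o=A_o^T$ are real symmetric, hence $\sigma(A_a),\sigma(A_o)\subset\mathds{R}$. By \eqref{eq:jac_eig} every eigenvalue of \eqref{eq:jac} is then real, so in particular the leading ones are, and the conclusion is immediate.

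For statement~2), I would first observe that a switching transformation $A\mapsto MAM$ is a similarity transformation, since any switching matrix satisfies $M^2=\mathcal{I}$, so $M=M^{-1}$; consequently $\sigma(M_aA_aM_a)=\sigma(A_a)$ and $\rho(M_aA_aM_a)=\rho(A_a)$, and likewise for $A_o$. Because $M_aA_aM_a$ is eventually positive, Proposition~\ref{prop:PerFr} endows it with the strong Perron--Frobenius property, so its spectral radius is a real positive eigenvalue that strictly dominates every other eigenvalue in modulus; transporting this back through the similarity shows $\rho_a:=\rho(A_a)\in\sigma(A_a)$ is real and positive with $|\lambda|\le\rho_a$ for all $\lambda\in\sigma(A_a)$, and analogously $\rho_o:=\rho(A_o)\in\sigma(A_o)$. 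It then remains to check that the real pair $(\rho_a,\rho_o)$ attains $K$. I would bound the three social-network terms separately, using $\gamma,\beta,\delta\ge 0$ together with $\operatorname{Re}(\lambda)\le|\lambda|\le\rho_a$, $\operatorname{Re}(\mu)\le|\mu|\le\rho_o$, and $\operatorname{Re}(\lambda\mu)\le|\lambda\mu|=|\lambda|\,|\mu|\le\rho_a\rho_o$. Each of these bounds is met simultaneously at $(\lambda,\mu)=(\rho_a,\rho_o)$, so $K=\alpha+\gamma\rho_a+\beta\rho_o+\delta\rho_a\rho_o$ and the eigenvalue generated by $(\rho_a,\rho_o)$ equals $-d+uK$, which is real and leading, again contradicting Assumption~\ref{ass1}.

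The two delicate points I expect are the following. First, one must use that $\rho(A_a)$ is genuinely an \emph{eigenvalue} of $A_a$ (rather than merely the maximal modulus over the spectrum); this is precisely what the Perron--Frobenius structure of condition~2) supplies and what can fail for a general matrix, and it is the crux of the argument. Second, the coupled cross-term $\delta\operatorname{Re}(\lambda\mu)$ must be handled with care, since a priori it could be maximized by some mixed choice of $\lambda$ and $\mu$; the nonnegativity of $\delta$ combined with the product modulus bound $|\lambda\mu|=|\lambda|\,|\mu|$ is what forces the three terms to be jointly maximized at the two dominant eigenvalues, so no separate optimization of the cross-term is needed.
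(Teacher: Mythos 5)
Your proof is correct and follows essentially the same route as the paper: part 1) via symmetry forcing a real spectrum, and part 2) via switching equivalence as a similarity transformation plus Proposition~\ref{prop:PerFr} to obtain real positive dominant eigenvalues $\rho(A_a)$, $\rho(A_o)$ whose generated eigenvalue of \eqref{eq:jac} is real and leading. The only difference is that you spell out, via the term-by-term bounds $\gamma\operatorname{Re}(\lambda)\le\gamma\rho_a$, $\beta\operatorname{Re}(\mu)\le\beta\rho_o$, $\delta\operatorname{Re}(\lambda\mu)\le\delta\rho_a\rho_o$ (valid since $\gamma,\beta,\delta\ge 0$), why the pair $(\rho_a,\rho_o)$ attains $K$ in \eqref{eq:K} --- a step the paper's proof asserts without detail --- so your write-up is, if anything, slightly more complete.
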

\begin{proof}
1) All eigenvalues $\lambda \in \sigma(A_a)$ and $\mu \in \sigma(A_o)$ are real because $A_a,A_i$ are symmetric; all eigenvalues $\eta(u,\lambda,\mu)$ are also real which violates Assumption \ref{ass1}. 2) Eventually positive matrices have the strong Perron-Frobenius property by Proposition \ref{prop:PerFr}. Conjugation by a switching matrix preserves eigenvalues as it is a similarity transformation. Thus, $A_a$ and $A_o$ possess unique dominant real eigenvalues $\lambda = \rho(A_a)$ and $\mu = \rho(A_o)$. The eigenvalue $\eta(u,\lambda,\mu)$ is therefore a unique dominant eigenvalue of the Jacobian \eqref{eq:jac}, which violates Assumption \ref{ass1}.
\end{proof}

Proposition~\ref{prop: no oscil graphs} singles out two classes of communications and belief system graphs for which no oscillations in beliefs are possible at an indecision-breaking bifurcation. In the first class, both graphs are undirected. In the second class both graphs are {\it eventually structurally balanced}, that is, their adjacency matrices are switching equivalent to eventually positive matrices. The two classes have in common that all the loops between any pairs of agents are positive, which makes oscillations impossible.
Conversely, when some negative feedback loops are present in either the communication or the belief system graph, then leading complex eigenvalues can appear in either graph and oscillations are possible. 

\begin{proposition}[Graphs that support oscillations]\label{prop: oscil graphs}

Consider \eqref{EQ:value_dynamics} with communication graph $\mathcal{G}_a$ and belief system graph $\mathcal{G}_o$ with signed adjacency matrices $A_a,A_o$. Suppose there exists a switching matrix $M$ such that $M A_a M$ ($M A_o M $) is eventually positive, and suppose the leading eigenvalues of $A_o (A_a)$ are a complex-conjugate pair with positive real part. Then there exists a critical value $\gamma^*$ ($\beta^*$) such that whenever $\gamma > \gamma^*$ $(\beta > \beta^*)$, the indecision-breaking bifurcation of the origin at $u = u^*$ is a Hopf bifurcation. 
\end{proposition}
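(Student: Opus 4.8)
The plan is to verify that, for $\gamma$ large enough, the three conditions \emph{i)}--\emph{iii)} that define a Hopf bifurcation (Section~\ref{sec: hopf bif}) all hold at $u = u^*$. Lemma~\ref{lem:stab} already supplies the loss of stability of the origin at $u^* = d/K$ once $K>0$, so the real content reduces to confirming Assumption~\ref{ass1} (which delivers \emph{i)} and \emph{ii)}) together with the transversality condition \emph{iii)}. I treat the unparenthesized case; the parenthesized one is identical after exchanging the roles of $(A_a,\gamma)$ and $(A_o,\beta)$.

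First I would extract the spectral structure of $A_a$. Since $MA_aM$ is eventually positive, Proposition~\ref{prop:PerFr} gives it the strong Perron--Frobenius property, and because conjugation by the switching matrix $M$ is a similarity transformation, $A_a$ has the same spectrum: a real dominant eigenvalue $\lambda_{PF} = \rho(A_a) > 0$ that strictly exceeds the modulus of every other eigenvalue. In particular the gap $\epsilon_a := \min_{\lambda' \neq \lambda_{PF}}\big(\lambda_{PF} - \operatorname{Re}(\lambda')\big) > 0$ is strictly positive. By Proposition~\ref{prop:eigen} the eigenvalues of the Jacobian are $\eta(u,\lambda,\mu)$ as in \eqref{eq:jac_eig}, so identifying the leading eigenvalue amounts to maximizing $g(\lambda,\mu) := \alpha + \gamma\operatorname{Re}(\lambda) + \operatorname{Re}\big((\beta + \delta\lambda)\mu\big)$ over the two finite spectra $\sigma(A_a)$, $\sigma(A_o)$.

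The key step, and the one I expect to be the main obstacle, is to show that for large $\gamma$ the maximizer is forced to take $\lambda = \lambda_{PF}$ \emph{uniformly} in $\mu$, so that the complex leading eigenvalue of $A_o$ is the one that survives. The mechanism is a competition of scales: the cross term $\operatorname{Re}\big((\beta+\delta\lambda)\mu\big)$ is bounded by a constant $C = \max_{\lambda,\mu}(\beta + \delta|\lambda|)|\mu|$ that depends only on the fixed spectra and gains, whereas replacing $\lambda_{PF}$ by any $\lambda' \neq \lambda_{PF}$ costs at least $\gamma\epsilon_a$ in the $\gamma\operatorname{Re}(\lambda)$ term. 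Hence $g(\lambda_{PF},\mu) - g(\lambda',\mu') \geq \gamma\epsilon_a - 2C$, and for $\gamma > \gamma^* := 2C/\epsilon_a$ every maximizer has $\lambda = \lambda_{PF}$. With $\lambda_{PF}$ real and $\beta + \delta\lambda_{PF} > 0$, the reduced objective $g(\lambda_{PF},\mu)$ is an increasing affine function of $\operatorname{Re}(\mu)$ and is maximized exactly at the $\mu$ of largest real part, which by hypothesis is the complex-conjugate pair $\mu^\dagger = \mu_o \pm i\mu_c$ with $\mu_c \neq 0$, $\mu_o > 0$. Thus the leading eigenvalues of $J(\mathbf{0},u)$ are generated by $(\lambda_{PF},\mu^\dagger)$; they form a complex-conjugate pair, and by \eqref{eq:eigs-form-b} with $\lambda_c = 0$ their imaginary part equals $\pm u\,(\beta + \delta\lambda_{PF})|\mu_c| \neq 0$, which is precisely Assumption~\ref{ass1}. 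I note that this uses $\beta + \delta\lambda_{PF} > 0$, i.e. the mild nondegeneracy that $\beta,\delta$ are not both zero.

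Finally I would close the bifurcation analysis. Because $\lambda_{PF} > 0$ and $\mu_o > 0$, the quantity $K = \alpha + \gamma\lambda_{PF} + \beta\mu_o + \delta\lambda_{PF}\mu_o$ from \eqref{eq:K-osc} is strictly positive, so Lemma~\ref{lem:stab} applies and the origin loses stability at $u^* = d/K$; the strict spectral gaps established above show that no eigenvalue other than the leading conjugate pair lies on the imaginary axis at $u^*$, giving condition \emph{ii)}. For transversality \emph{iii)}, the explicit form $\operatorname{Re}(\eta_\pm(u)) = -d + uK$ yields $\tfrac{\partial}{\partial u}\operatorname{Re}(\eta_\pm) = K > 0 \neq 0$. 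All three Hopf conditions therefore hold at $u = u^*$, which completes the argument; one takes $\gamma^* = 2C/\epsilon_a$, and the parenthesized statement follows verbatim with $\mu_{PF} = \rho(A_o)$, coefficient $\gamma + \delta\mu_{PF}$, and threshold $\beta^*$.
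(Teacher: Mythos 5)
Your proposal is correct and follows essentially the same route as the paper: both arguments use the Perron--Frobenius property of the switching-equivalent eventually positive matrix to obtain a real dominant $\lambda$ with a spectral gap, and then choose $\gamma^*$ large enough that the pair $(\lambda,\mu^\dagger)$ generates the leading (complex) eigenvalues of the Jacobian, after which $K>0$ and the Hopf conditions follow; the paper's threshold $\gamma^* = \max_{\lambda_a,\mu_o} -\bigl(\beta \operatorname{Re}(\mu-\mu_o)+\delta\operatorname{Re}(\mu\lambda-\mu_o\lambda_a)\bigr)/(\lambda-\lambda_a)$ is just a sharper version of your $2C/\epsilon_a$ bound. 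Your explicit observation that $\beta+\delta\lambda>0$ is needed for a nonzero imaginary part is a worthwhile nondegeneracy remark that the paper's proof leaves implicit.
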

\begin{proof}
Consider without loss of generality an eventually structurally balanced $A_a$ with dominant eigenvalue $\lambda > 0$, and $A_o$ with leading eigenvalues $\mu, \overline{\mu}$. The eigenvalues $\eta(u,\lambda,\mu),\eta(u,\lambda,\overline{\mu})$ are the leading eigenvalues of the Jacobian \eqref{eq:jac} whenever $
\gamma \operatorname{Re}( \lambda - \lambda_a ) + \beta \operatorname{Re}(\mu - \mu_o) + \delta\operatorname{Re}(\mu \lambda - \mu_o \lambda_a) > 0$
for all $\lambda_a \in \sigma(A_a), \mu_o \in \sigma(A_o)$ with $\lambda_a \neq \lambda,\overline{\lambda}$; $\mu_a \neq \mu, \overline{\mu}$. This is satisfied whenever $\gamma > \max_{\lambda_a \in \sigma(A_a), \mu_o \in \sigma(A_o)} -\big(\beta \operatorname{Re}(\mu - \mu_o) + \delta\operatorname{Re}(\mu \lambda - \mu_o \lambda_a)\big)/(\lambda - \lambda_a) =:  \gamma^*$. Furthermore $K = \alpha + \gamma \lambda + \beta \operatorname{Re}(\mu) + \delta \lambda \operatorname{Re}(\mu) > 0$ and the necessary and sufficient conditions of Theorem \ref{thm:hopf} are satisfied. Analogous arguments establish existence of $\beta^*$. \end{proof}

In the classes of communication and belief system graphs singled out by Proposition~\ref{prop: oscil graphs}, the graph whose leading eigenvalues are not complex must be eventually structurally balanced. This ensures that the complex leading eigenvalues of  the communication or belief system graph (as appropriate) are mapped to complex leading eigenvalues of~\eqref{eq:jac}.

\section{Numerical examples \label{sec:ex}}

We explore how different communication and belief system graphs shape the emerging oscillations. In all the examples, $S_1(\cdot) = \tanh(\cdot)$, $S_2(\cdot) = \frac{1}{2}\tanh(2 \cdot)$.

\subsection{Single topic: communication-induced oscillations}

We first consider  \eqref{EQ:value_dynamics} in the case that agents evaluate a single topic so $\Zz_i = z_{i1} \in \mathds{R}$ and only the communication graph $\mathcal{G}_a$ plays a role in the belief dynamics. We denote  $z_{i1}$ by $z_i$ and the dynamics are 
\begin{equation}
    \dot{z}_i = - d \ z_i + u \  S_{1}\left( \alpha z_{i} + \gamma \textstyle \sum_{\substack{k=1 \\ k \neq i}}^\Na(A_a)_{ik} z_{k}\right). \label{eq:scalar-dyn}
\end{equation}
Since the belief system adjacency matrix $A_o=1$ in the one-topic  case, $\mathbf{v}_o=\mathbf{w}_o=\mu^{\dagger}=1$ in Theorem \ref{thm:hopf}. 

Consider \eqref{eq:scalar-dyn} for seven agents and communication graph $\mathcal{G}_a$ of Fig. \ref{fig:ex1}a, with model parameters $d = 1$, $\alpha = \gamma = 0.1$. The adjacency matrix $A_a$ has complex conjugate leading eigenvalues $\lambda_{\pm} = 0.90 \pm 0.43 i$, which generate the leading eigenvalues of the Jacobian $\eta_{\pm} = -d + u (\alpha +\gamma \lambda_{\pm})$. The conditions for oscillations in Theorem \ref{thm:hopf} are satisfied.
The origin loses stability at $u^* = 5.26$ and $b \approx - 0.0041$ from \eqref{eq:Hopf_stab_cond}, which predicts a supercritical bifurcation of stable oscillations. Since $|(\mathbf{v}_a)_i| = |(\mathbf{v}_a)_k|$ for $i,k = 1, \dots, 7$, all agent opinions oscillate with the same amplitude. By Theorem~\ref{thm:hopf} the predicted period of oscillation is approximately $27.53$, and the predicted oscillation phases of the seven agents relative to agent 1 are $(0, 3.59 , 0.90 , 4.49, 1.80 , 5.39, 2.69)$. Fig. \ref{fig:ex1}b illustrates these predictions.

\begin{figure}
    \centering
    \includegraphics[width=\linewidth]{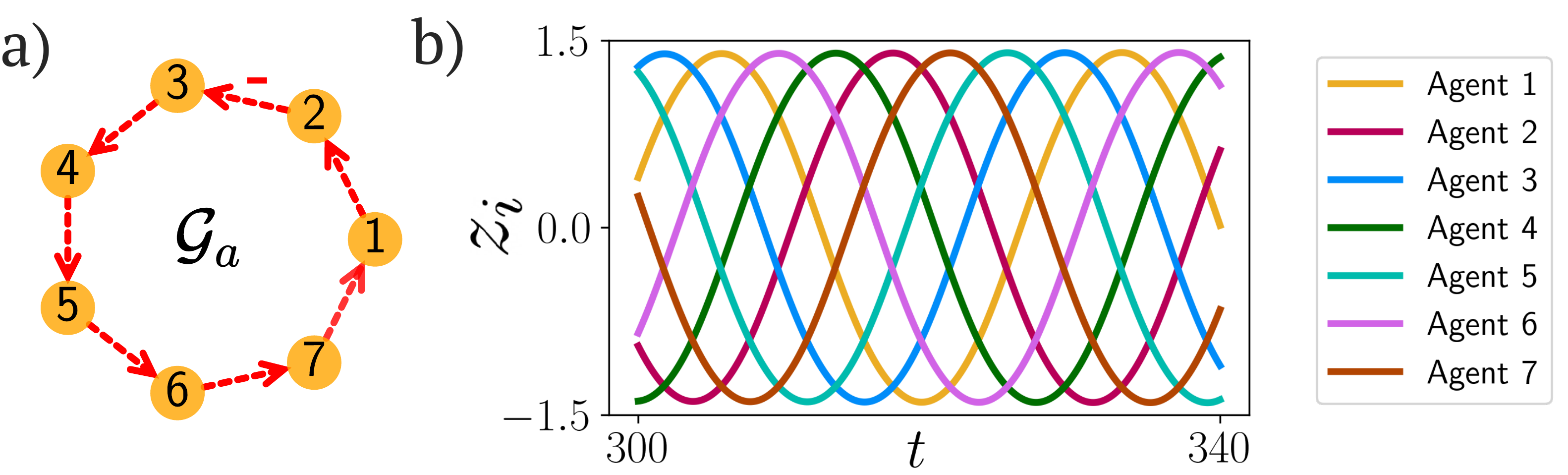}
    \caption{a) Communication graph for seven agents; red edges represent negative connections; b) trajectories of \eqref{eq:scalar-dyn} with the communication graph of a) from random initial conditions. Parameters: $d = 1$, $\alpha = \gamma = 0.1$, $u = 5.35$}
    \label{fig:ex1}
\end{figure}

\subsection{Multiple topics: communication and belief system-induced oscillations}

In the multiple-topic case, it is the \textit{interaction} of communication and belief system graphs that shapes the oscillations.
We stress that in all examples interchanging agents for topics and communication graph for belief system graph preserves the model dynamical behavior (modulo a reordering of state variables) but changes its interpretation: a given phase difference and amplitude oscillation pattern between the agents (topics) is mapped to the same phase difference and amplitude oscillation pattern between the topics (agents). We consider three examples with the same belief system (Fig. \ref{fig:ex2graphs}a). The adjacency matrix $A_o$ has a complex conjugate pair of leading eigenvalues, $\mu_{\pm} \approx 0.66 \pm 0.56 i$. For all three examples, $d = 1$, $\alpha = \gamma = 0.1$, and $\beta = \delta = 0.25$.

\begin{figure}
    \centering
    \includegraphics[width=\linewidth]{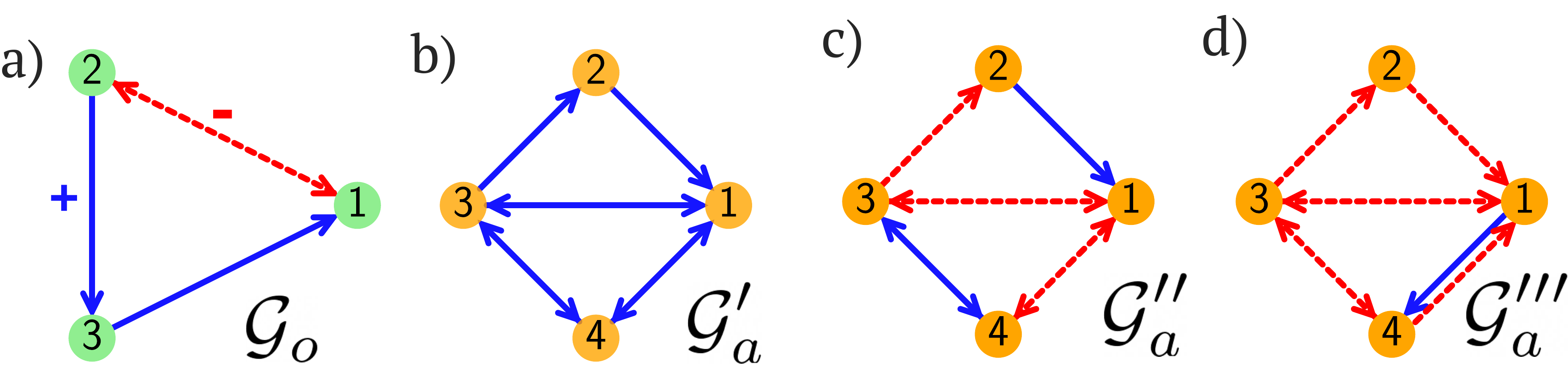}
    \caption{a) Belief system graph for three topics; b)-d) three different communication graphs for four agents with the same connectivity. Blue (red) edges represent positive (negative) connections.}
    \label{fig:ex2graphs}
\end{figure}

\subsubsection{Agreement oscillations}
Consider the strongly connected communication graph  $\mathcal{G}_a'$ (Fig.  \ref{fig:ex2graphs}b) with purely cooperative agents. Its adjacency matrix $A_a'$ has the strong Perron-Frobenius property, and the
conditions of Proposition \ref{prop: oscil graphs} are satisfied. By \eqref{eq:Hopf_stab_cond},  $b \approx -0.46 < 0$, and thus a supercritical bifurcation of stable periodic orbits is expected at $u^* = 0.93$ (Fig. \ref{fig:ex2a}). Phase differences between any two belief trajectories on the same topic must be zero (Fig. \ref{fig:ex2a}a) because $A_a$ has the strong Perron-Frobenius property and its dominant eigenvector $\mathbf{v}_a \succ 0$.
In contrast, each agent's beliefs on different topics are not in phase (Fig. \ref{fig:ex2a}b) as predicted by the entries of $\mathbf{v}_o$.

\begin{figure}
    \centering
    \includegraphics[width=\linewidth]{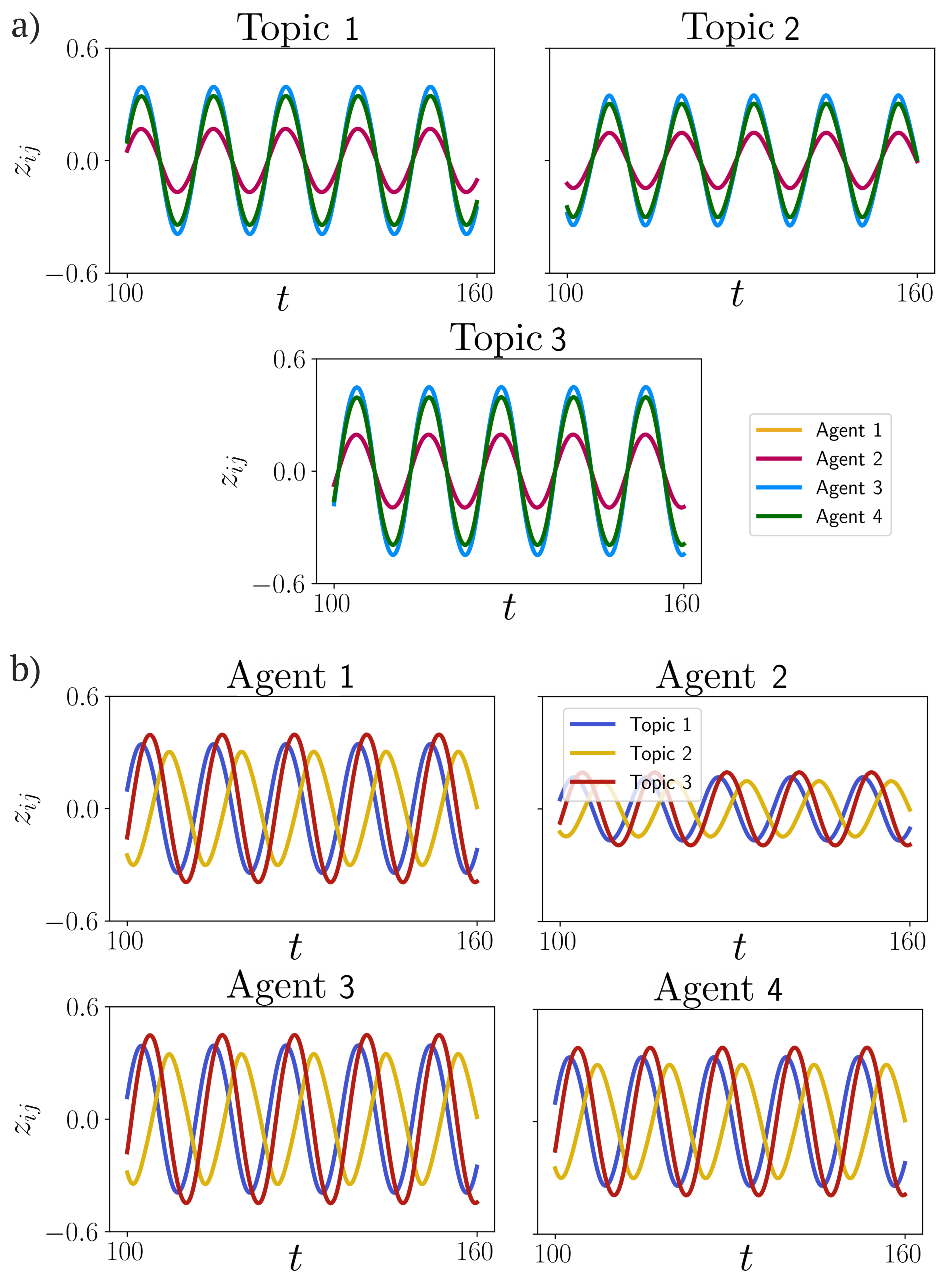}
    \caption{ Trajectories $z_{ij}(t)$ of \eqref{EQ:value_dynamics} with belief system $\mathcal{G}_o$ and communication graph $\mathcal{G}_a'$ of Fig.~\ref{fig:ex2graphs} from random initial conditions. Trajectories are grouped by a) topic  b) agent. In a) trajectories of agents 1 and 4 overlap in all three  plots.  Parameters: $d = 1$, $\alpha = \gamma = 0.1$, $\beta = \delta = 0.25$, $u = 1.25$ }
    \label{fig:ex2a}
\end{figure}

\subsubsection{Clustered disagreement oscillations} 

\begin{figure}
    \centering
    \includegraphics[width=\linewidth]{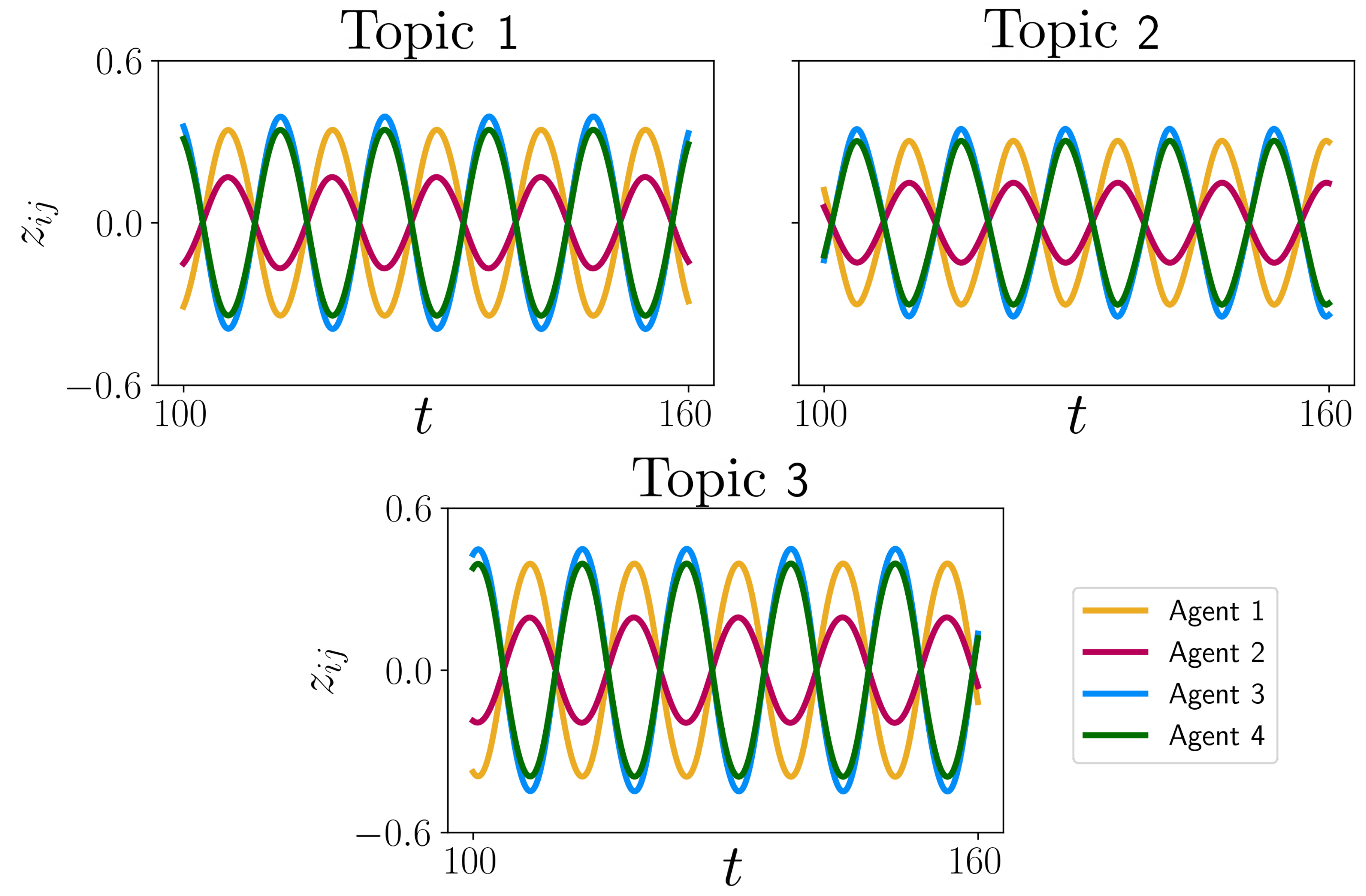}
    \caption{ Trajectories $z_{ij}(t)$ of \eqref{EQ:value_dynamics} with belief system $\mathcal{G}_o$ and communication graph $\mathcal{G}_a''$ of Fig.~\ref{fig:ex2graphs} from random initial conditions, grouped by topic. Parameters: $u = 1.25$, $d = 1$, $\alpha = \gamma = 0.1$, $\beta = \delta = 0.25$, $u = 1.25$}
    \label{fig:ex2b}
\end{figure}
Consider the mixed-sign communication graph, $\mathcal{G}_a''$ in Fig.~\ref{fig:ex2graphs}c. The adjacency matrix $A_a''$ of this graph is generated from $A_a'$ of the previous example as $A_a'' = M A_a' M$ where $M = \operatorname{diag}(1,1,-1,-1)$ is a switching matrix. As in the previous example, $b \approx -0.46$ and $u^* \approx 0.93$. In contrast to the previous example,
$\operatorname{sign}(\mathbf{v}_a)_1=\operatorname{sign}(\mathbf{v}_a)_2=-\operatorname{sign}(\mathbf{v}_a)_3=-\operatorname{sign}(\mathbf{v}_a)_4$. As a result, the beliefs of agents 1 and 2  oscillate in anti-phase with respect to the beliefs of agents 3 and 4 (Fig. \ref{fig:ex2b}).

\subsubsection{Asynchronous disagreement oscillations}
Consider the mixed-sign communication graph $\mathcal{G}_a'''$ in Fig. \ref{fig:ex2graphs}d, whose adjacency matrix $A_a'''$ has a complex-conjugate set of leading eigenvalues, $\lambda_{\pm} \approx 0.88 \pm 0.74 i$. The two pairs $(\lambda_{+},\mu_-)$, $(\lambda_-,\mu_+)$ generate the two complex-conjugate leading eigenvalues of $J(\mathbf{0},u)$ which satisfy the conditions of Theorem \ref{thm:hopf}. We compute $b \approx -0.13$ and a supercritical bifurcation of stable periodic orbits is expected at $u^* \approx 1.64$ (Fig. \ref{fig:ex2c}). In contrast to the previous examples, the leading eigenvectors of $J(\mathbf{0},u)$ are a product of two complex eigenvectors, and there is no phase synchronization in the resulting oscillations along any topic or within the agents' internal dynamics.

\begin{figure}
    \centering
    \includegraphics[width=\linewidth]{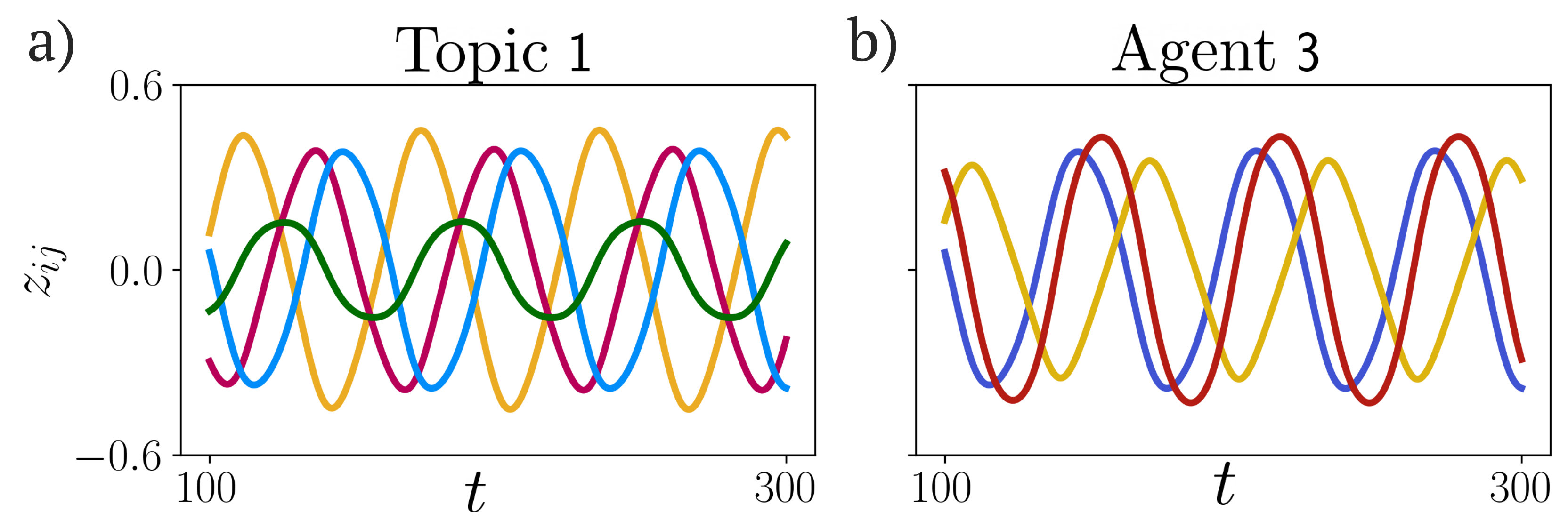}
    \caption{Representative trajectories $z_{ij}(t)$ of \eqref{EQ:value_dynamics} with belief system $\mathcal{G}_o$ and communication graph $\mathcal{G}_a'''$ of Fig.~\ref{fig:ex2graphs}, from random initial conditions. a) Beliefs of all agents on topic 1; b) beliefs of agent 1 on all topics. Color legend for a) and b) the same as in Fig. \ref{fig:ex2a} a) and b). Parameters: $d = 1$, $\alpha = \gamma = 0.1$, $\beta = \delta = 0.25$, $u = 1.7$}
    \label{fig:ex2c}
\end{figure}

\section*{Appendix: proof of Theorem \ref{thm:hopf}}
1) To establish existence of periodic orbits we check that the system \eqref{EQ:value_dynamics} under the stated assumptions satisfies the conditions of the Hopf bifurcation theorem \cite[Theorem 3.4.2]{guckenheimer2013nonlinear}. When $u = u^* = d/K$, the leading eigenvalues of \eqref{eq:jac} are a simple purely imaginary pair
$
\eta_{\pm}(u^*) = \pm i u^* \big| \gamma \lambda_c + \beta \mu_c + \delta ( \lambda_a \mu_c + \lambda_c \mu_o) \big| \neq 0 
$, 
which satisfies the eigenvalue condition (H1) of the Hopf theorem. Next, we check that the leading eigenvalues cross the imaginary axis with nonzero speed as $u$ is varied, i.e. 
$
    \frac{d}{du}\operatorname{Re}(\eta_{\pm}(u)) =  K > 0 
$, 
which satisfies the nonzero crossing speed condition (H2) of the Hopf theorem. Existence of periodic orbits directly follows by the Hopf theorem. By this theorem and by the definition of a center manifold \cite[Theorem 3.2.1]{guckenheimer2013nonlinear}, the solutions appear along a unique $W^s$ which is tangent at $u = u^*$ to $\mathcal{N}\big(J(\mathbf{0},u^*)\big) = \operatorname{span}\{ \operatorname{Re}(\mathbf{v}_a \otimes \mathbf{v}_o), \operatorname{Im}(\mathbf{v}_a \otimes \mathbf{v}_o) \} $.

To show 2) and 3) we first compute the coefficients of a third-order approximation of \eqref{EQ:value_dynamics} following the Lyapunov-Schmidt reduction for a Hopf bifurcation  \cite[Chapter VIII, Proposition 3.3]{Golubitsky1985}. This approximation reads
$
    f(y,u) = K y (u - u^*)  + \frac{1}{16} u^* b y^3
$, 
where $K$ is defined in \eqref{eq:K-osc} and $b$ is defined in \eqref{eq:Hopf_stab_cond}.
As long as $b \neq 0$, by \cite[Chapter VIII, Theorems 2.1 and 3.2]{Golubitsky1985} the reduced bifurcation equation $f(y,u)$ possesses a pitchfork bifurcation which is supercritical for $b<0$ and subcritical for $b>0$. When
$\lvert u - u^*\rvert$ is small, solutions to $f(y,u) = 0$ are in one-to-one correspondence with orbits of small amplitude periodic solutions to the system \eqref{EQ:value_dynamics} with period near $ 2 \pi/(u^*|\gamma \lambda_c + \beta \mu_c + \delta (\lambda_a \mu_c + \lambda_c \mu_o)|)=:1/\omega$. For $u$ near $u^*$, the small amplitude oscillations can be approximated to first order as scalar multiples of $e^{i \omega t} \mathbf{v}_a \otimes \mathbf{v}_o$ from which the conclusions on phase and amplitude difference between agents follow. When $b < 0$ ($>0$), the bifurcating periodic solutions are stable (unstable) by \cite[Chapter VIII, Theorem 4.1]{Golubitsky1985}. 



\bibliographystyle{./bibliography/IEEEtran}
\bibliography{./bibliography/references}

\begin{thebibliography}{10}
\providecommand{\url}[1]{#1}
\csname url@samestyle\endcsname
\providecommand{\newblock}{\relax}
\providecommand{\bibinfo}[2]{#2}
\providecommand{\BIBentrySTDinterwordspacing}{\spaceskip=0pt\relax}
\providecommand{\BIBentryALTinterwordstretchfactor}{4}
\providecommand{\BIBentryALTinterwordspacing}{\spaceskip=\fontdimen2\font plus
\BIBentryALTinterwordstretchfactor\fontdimen3\font minus
  \fontdimen4\font\relax}
\providecommand{\BIBforeignlanguage}[2]{{%
\expandafter\ifx\csname l@#1\endcsname\relax
\typeout{** WARNING: IEEEtran.bst: No hyphenation pattern has been}%
\typeout{** loaded for the language `#1'. Using the pattern for}%
\typeout{** the default language instead.}%
\else
\language=\csname l@#1\endcsname
\fi
#2}}
\providecommand{\BIBdecl}{\relax}
\BIBdecl

\bibitem{fink2002oscillation}
E.~L. Fink, S.~A. Kaplowitz, and S.~M. Hubbard, ``Oscillation in beliefs and
  decisions,'' \emph{The Persuasion Handbook: Developments in Theory and
  Practice. Thousand Oaks, CA: Sage Publications}, pp. 17--38, 2002.

\bibitem{stimson2018public}
J.~Stimson, \emph{Public opinion in America: Moods, cycles, and swings}.\hskip
  1em plus 0.5em minus 0.4em\relax Routledge, 2018.

\bibitem{Liu2014event}
S.-C. Liu, T.~Delbruck, G.~Indiveri, A.~Whatley, and R.~Douglas,
  \emph{Event-based neuromorphic systems}.\hskip 1em plus 0.5em minus
  0.4em\relax John Wiley \& Sons, 2014.

\bibitem{DeGroot1974}
M.~H. DeGroot, ``Reaching a consensus,'' \emph{Journal of the American
  Statistical Association}, vol.~69, no. 345, pp. 121--132, 1974.

\bibitem{OlfatiSaber2004}
R.~{Olfati-Saber} and R.~M. {Murray}, ``Consensus problems in networks of
  agents with switching topology and time-delays,'' \emph{IEEE Trans. Autom.
  Control}, vol.~49, no.~9, pp. 1520--1533, 2004.

\bibitem{friedkin2016network}
N.~E. Friedkin, A.~V. Proskurnikov, R.~Tempo, and S.~E. Parsegov, ``Network
  science on belief system dynamics under logic constraints,'' \emph{Science},
  vol. 354, no. 6310, pp. 321--326, 2016.

\bibitem{parsegov2016novel}
S.~E. Parsegov, A.~V. Proskurnikov, R.~Tempo, and N.~E. Friedkin, ``Novel
  multidimensional models of opinion dynamics in social networks,'' \emph{IEEE
  Trans. Autom. Control}, vol.~62, no.~5, pp. 2270--2285, 2016.

\bibitem{ye2019consensus}
M.~Ye, J.~Liu, L.~Wang, B.~D. Anderson, and M.~Cao, ``Consensus and
  disagreement of heterogeneous belief systems in influence networks,''
  \emph{IEEE Trans. Autom. Control}, vol.~65, no.~11, pp. 4679--4694, 2019.

\bibitem{pan2018bipartite}
L.~Pan, H.~Shao, M.~Mesbahi, Y.~Xi, and D.~Li, ``Bipartite consensus on
  matrix-valued weighted networks,'' \emph{IEEE Transactions on Circuits and
  Systems II: Express Briefs}, vol.~66, no.~8, pp. 1441--1445, 2018.

\bibitem{ye2020continuous}
M.~Ye, M.~H. Trinh, Y.-H. Lim, B.~D. Anderson, and H.-S. Ahn, ``Continuous-time
  opinion dynamics on multiple interdependent topics,'' \emph{Automatica}, vol.
  115, p. 108884, 2020.

\bibitem{ahn2020opinion}
H.-S. Ahn, Q.~Van~Tran, M.~H. Trinh, M.~Ye, J.~Liu, and K.~L. Moore, ``Opinion
  dynamics with cross-coupling topics: Modeling and analysis,'' \emph{IEEE
  Trans. Computat. Social Syst.}, vol.~7, no.~3, pp. 632--647, 2020.

\bibitem{wang2022characterizing}
C.~Wang, L.~Pan, H.~Shao, D.~Li, and Y.~Xi, ``Characterizing bipartite
  consensus on signed matrix-weighted networks via balancing set,''
  \emph{Automatica}, vol. 141, p. 110237, 2022.

\bibitem{bizyaeva2023tac}
A.~Bizyaeva, A.~Franci, and N.~E. Leonard, ``Nonlinear opinion dynamics with
  tunable sensitivity,'' \emph{IEEE Trans. Autom. Control}, vol.~68, no.~3, pp.
  1415--1430, 2023.

\bibitem{FranciSIADS}
\BIBentryALTinterwordspacing
A.~Franci, M.~Golubitsky, I.~Stewart, A.~Bizyaeva, and N.~E. Leonard,
  ``Breaking indecision in multi-agent, multi-option dynamics,'' 2022.
  [Online]. Available: \url{https://arxiv.org/abs/2206.14893}
\BIBentrySTDinterwordspacing

\bibitem{bizyaeva2022switching}
A.~Bizyaeva, G.~Amorim, M.~Santos, A.~Franci, and N.~E. Leonard, ``Switching
  transformations for decentralized control of opinion patterns in signed
  networks: Application to dynamic task allocation,'' \emph{IEEE Control
  Systems Letters}, vol.~6, pp. 3463--3468, 2022.

\bibitem{noutsos2006perron}
D.~Noutsos, ``On {P}erron--{F}robenius property of matrices having some
  negative entries,'' \emph{Linear Algebra and its Applications}, vol. 412, no.
  2-3, pp. 132--153, 2006.

\bibitem{Golubitsky1985}
M.~Golubitsky and D.~G. Schaeffer, \emph{Singularities and Groups in
  Bifurcation Theory}, ser. Applied Mathematical Sciences.\hskip 1em plus 0.5em
  minus 0.4em\relax New York, NY: Springer-Verlag, 1985, vol.~51.

\bibitem{horn2012matrix}
R.~A. Horn and C.~R. Johnson, \emph{Matrix Analysis}.\hskip 1em plus 0.5em
  minus 0.4em\relax Cambridge University Press, 2012.

\bibitem{horn1991topics}
------, \emph{Topics in Matrix Analysis, 1991}.\hskip 1em plus 0.5em minus
  0.4em\relax Cambridge University Press, Cambridge, 1991.

\bibitem{Khalil2002}
H.~Khalil, \emph{Nonlinear Systems}, 3rd~ed.\hskip 1em plus 0.5em minus
  0.4em\relax Pearson Education International Inc., 2000.

\bibitem{guckenheimer2013nonlinear}
J.~Guckenheimer and P.~Holmes, \emph{Nonlinear Oscillations, Dynamical Systems,
  and Bifurcations of Vector Fields}.\hskip 1em plus 0.5em minus 0.4em\relax
  Springer, 2013.

\end{thebibliography}

\end{document}